\newtheorem{thm}{Theorem}
\newtheorem{cor}[thm]{Corollary}
\newcommand{\reals}{\mathbb{R}}
\newcommand{\Prob}{\mathbb{P}}
\begin{document}
\title{ROC curves for LDA classifiers}
\author{Mateusz Krukowski}
\affil{Institute of Mathematics, \L\'od\'z University of Technology, \\ W\'ol\-cza\'n\-ska 215, \
90-924 \ \L\'od\'z, \ Poland \\ \vspace{0.3cm} e-mail: mateusz.krukowski@p.lodz.pl}
\maketitle

\begin{abstract}
In the paper, we derive an analytic formula for the ROC curves of the LDA classifiers. We establish elementary properties of these curves (monotonicity and concavity), provide formula for the area under curve (AUC) and compute the Youden J-index. Finally, we illustrate the performance of our results on a real--life dataset of Wisconsin breast cancer patients. 
\end{abstract}

\smallskip
\noindent 
\textbf{Keywords : } binary classification problem, linear discriminant analysis, receiver operating characteristic (ROC), area under curve (AUC)
\vspace{0.2cm}
\\
\textbf{AMS Mathematics Subject Classification (2020): } 62H30, 68P01

\section{Introduction}
\label{section:Introduction}

Classification is one of the fundamental problems in data analysis (see Chapters 5--8 in \cite{Bonaccorso} or \cite{Flach}, \cite{Herbrich}, \cite{MichieSpiegelhalterTaylor}). In its binary version, which we focus on in the current paper, the task is to distinguish elements coming from two classes (referred to as negative and positive), given a certain amount of their features. For instance, we wish to 
\begin{itemize}
	\item predict whether a student will pass or fail an exam based on how long they have studied, or   
	\item tell apart malignant and benign tumors by looking at their mean area, concavity and fractal dimension, or
	\item diagnose diabetes by examining patient's average blood pressure, cholesterol, blood sugar level etc.
\end{itemize}

\noindent
Throughout the paper, we assume that there are $n$ features, each one being a continuous variable. Hence, the whole feature space is identified with $\reals^n.$

The negative class is customarily indexed with $0$, while the positive class is indexed with $1$. The classes need not be balanced, so that an element is drawn from the negative class with probability $p_0$ and from the positive class with probability $p_1$ (not necessarily equal to $p_0$). Obviously, $p_0 + p_1 = 1,$ since the classification is binary, i.e., there are only two classes.

Given a function $f:\reals^n\longrightarrow \reals$ and a threshold space $\Theta\subset \reals,$ we define a family of binary classifiers $C_{\theta}:\reals^n\longrightarrow \{0,1\},\ \theta\in\Theta$ via the formula 
$$C_{\theta}(x) := \mathds{1}_{\{f > \theta\}}(x).$$

\noindent
For instance, the function 
$$f_{logistic}(x; \alpha,\beta) := \frac{1}{1+e^{-(\alpha + \langle \beta|x\rangle)}}$$

\noindent
and threshold space $\Theta_{logisitc} = [0,1]$ define a family of logistic regression classifiers. Parameters $\alpha,\beta$ are chosen by maximizing (usually via some sort of gradient descent) the likelihood function (see Chapter 4 in \cite{Geron} or Chapter 16 in \cite{Grus}). Another example is 
$$f_{kNN}(x) := \frac{\sum_{y \in S(x)}\ y}{k}$$

\noindent
and $\Theta_{kNN} := \{0,1,\ldots,k\},$ where $S(x)$ is a set of labels (either $0$ or $1$) of those $k$ points in the data set, which are closest to $x$. The pair $(f_{kNN}, \Theta_{kNN})$ gives rise to a family of k--nearest neighbors classifiers (see Chapter 12 in \cite{Grus}). 

The current paper is devoted to the family of LDA classifiers and we will review their fundamentals at the beginning of Section \ref{section:ROC}. For the time being, let us suppose that we have decided to use a particular family of classifiers (with a specific threshold space $\Theta$), be it logistic regression, kNN, LDA, SVM etc. We wish to measure the model's performance and thus, we distinguish four possible scenarios: 
\begin{itemize}
	\item Model correctly predicts the negative class, i.e., it predicts label $0$ when the test data point's true label is $0$. We say that the prediction is true negative. 
	\item Model incorrectly predicts the positive class, i.e., it predicts label $1$ when the test data point's true label is $0$. We say that the prediction is false positive.
	\item Model incorrectly predicts the negative class, i.e., it predicts label $0$ when the test data point's true label is $1$. We say that the prediction is false negative.
	\item Model correctly predicts the positive class, i.e., it predicts label $1$ when the test data point's true label is $1$. We say that the prediction is true positive.
\end{itemize}

\noindent
Let $M_{\theta}:\reals^n\longrightarrow \{TN, FP, FN, TP\}$ be a family of random variables which depend on the threshold $\theta\in\Theta$ and describe four possible scenarios above. In practice, we never know the true distribution of $M_{\theta}$ (i.e., the values $\Prob(M_{\theta} =  TN),\ \Prob(M_{\theta} = FP)$ etc.) and thus have to resort to approximations on the basis of the provided sample. One of the main points of our paper is that the family of LDA classifiers may be an exception in this regard. This is due to the fact that at its core, LDA \textit{assumes} a particular form of distribution of the classes and thus, indirectly, the distribution of $M_{\theta}.$ Theorem \ref{maintheorem} in Section \ref{section:ROC} is devoted to the computation of this distribution. For now, we make an obvious remark that the fewer false predictions a classifier makes the better it is. This motives the introduction of false positive rate ($FPR$) and true positive rate ($TPR$) defined by  
\begin{equation}
\begin{split}
FPR(\theta) &:= \frac{\Prob(M_{\theta} = FP)}{\Prob(M_{\theta} = FP) + \Prob(M_{\theta} = TN)},\\
TPR(\theta) &:= \frac{\Prob(M_{\theta} = TP)}{\Prob(M_{\theta} = TP) + \Prob(M_{\theta} = FN)}.
\end{split}
\label{fprtpr}
\end{equation}

\noindent
Let us observe that a threshold $\theta\in\Theta$ such that $FPR(\theta) = 0$ and $TPR(\theta) = 1$ corresponds to a classifier, which is never mistaken, i.e., $\Prob(M_{\theta} = FP) = \Prob(M_{\theta} = FN) = 0.$ Obviously, such a scenario does not occur in practice, but it still serves as an ideal reference point. 

In summary, given a whole slew of binary classifiers $C_{\theta}, \theta\in\Theta$ we are interested in finding the one, for which the FPR and TPR are (in some sense, which we will discuss later) close to $0$ and $1$, respectively. In order to achieve this goal, we study the whole set 
$$\{(FPR(\theta), TPR(\theta)) \ :\ \theta \in \Theta\},$$ 

\noindent
called the ROC curve. 

\section{LDA classifier and its theoretical ROC curve}
\label{section:ROC}

Amongst various classifiers such as the logistic regression, k--nearest neighbours (kNN) and support vector machine (SVM), linear discriminant analysis (LDA) stands out due to its underlying assumption. The foundational premise of the LDA classifier is that both classes exhibit multivariate normal distribution with the same covariance matrix $\Sigma.$ In other words, the probability density function of class $i$ (with $i$ equal to either $0$ or $1$) is given by
$$\phi_i(x) := (2\pi)^{-\frac{n}{2}} \det(\Sigma)^{-\frac{1}{2}} \exp\left( - \frac{1}{2} (x-\mu_i)^t \Sigma^{-1} (x-\mu_i)\right),$$

\noindent
where
\begin{itemize}
	\item $n$ is the number of dimensions of the data set (i.e., number of features), 
	\item $\mu_i$ is the vector mean of class $i.$
\end{itemize}

\noindent
The family of LDA classifiers is defined by the function 
$$f_{LDA}(x) := \log \frac{\phi_1(x)}{\phi_0(x)} = (\mu_1-\mu_0)^t\Sigma^{-1}x + 0.5\bigg(\mu_0^t\Sigma^{-1}\mu_0 - \mu_1^t\Sigma^{-1}\mu_1\bigg)$$ 

\noindent
and the threshold space $\Theta_{LDA} := \reals.$ For convenience, we put 
\begin{equation}
\begin{split}
\alpha &:= \Sigma^{-1}(\mu_1 - \mu_0),\\
\beta &:= 0.5\bigg(\mu_0^t\Sigma^{-1}\mu_0 - \mu_1^t\Sigma^{-1}\mu_1\bigg),
\end{split}
\label{alphaandbeta}
\end{equation}

\noindent
so that $f_{LDA}(x) = \alpha^t x + \beta.$ Furthermore, we use $H_1(\theta)$ to denote the positive half-space $\alpha^t x + \beta > \theta$ and $H_0(\theta)$ to denote the negative half-space $\alpha^t x + \beta < \theta$. 

At this stage, having reviewed the basics of LDA classifiers, we come back to the distribution of $M_{\theta},$ partially discussed in the Introduction. We observe that 
\begin{equation*}
\begin{split}
\Prob(M_{\theta} = TN) &= \Prob(\text{point lies in } H_0(\theta) \text{ and belongs to class } 0) \\
&= \Prob(\text{point belongs to class } 0) \Prob(\text{point lies in } H_0(\theta) | \text{point belongs to class } 0) \\
&= p_0 \int_{H_0(\theta)}\ \phi_0(x)\ dx.
\end{split}
\end{equation*}

\noindent
Similarly, we have  
\begin{equation*}
\begin{split}
\Prob(M_{\theta} = FP) &= p_0 \int_{H_1(\theta)}\ \phi_0(x)\ dx,\\
\Prob(M_{\theta} = FN) &= p_1 \int_{H_0(\theta)}\ \phi_1(x)\ dx,\\
\Prob(M_{\theta} = TP) &= p_1 \int_{H_1(\theta)}\ \phi_1(x)\ dx,
\end{split}
\end{equation*}

\noindent
so the next step is to devise a theorem enabling the calculation of the above integrals. 

\begin{thm}\label{maintheorem}
Let $X$ be a random variable with multivariate normal distribution, i.e., $X \sim N(\mu, \Sigma)$ where $\mu\in\reals^n$ is the mean and $\Sigma$ is a positive-definite (hence symmetric) covariance matrix. If $H$ is a half-space given by $\alpha^t x + \beta < 0$ where $\alpha\in \reals^n, \beta \in \reals,$ then
$$\int_H\ \phi_X(x)\ dx = \Phi\left(-\frac{\alpha^t\mu + \beta}{\|\sqrt{\Lambda}Q^t\alpha\|}\right),$$

\noindent
where 
\begin{itemize}
	\item $\phi_X$ is the probability density function of $X$,
	\item $\Phi$ is the cumulative distribution function of the univariate, standardized normal distribution,
	\item $\Sigma = Q\Lambda Q^t$ is a spectral decomposition, i.e., $Q$ is an orthogonal matrix, whose columns are  eigenvectors of $\Sigma$ and $\Lambda$ is a diagonal matrix with the corresponding eigenvalues. 
\end{itemize}
\end{thm}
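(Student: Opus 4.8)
The plan is to read the integral probabilistically. Since $\phi_X$ is the density of $X$, the left-hand side equals $\Prob(X\in H)=\Prob(\alpha^t X+\beta<0)=\Prob(\alpha^t X<-\beta)$. The crucial point is that $\alpha^t X$ is a linear functional of a Gaussian vector and is therefore itself univariate normal; reading off its mean and variance gives $\alpha^t X\sim N(\alpha^t\mu,\ \alpha^t\Sigma\alpha)$. Standardizing, $\Prob(\alpha^t X<-\beta)=\Phi\!\left(\frac{-\beta-\alpha^t\mu}{\sqrt{\alpha^t\Sigma\alpha}}\right)=\Phi\!\left(-\frac{\alpha^t\mu+\beta}{\sqrt{\alpha^t\Sigma\alpha}}\right)$, which is the asserted formula once the denominator is rewritten.

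To rewrite the denominator I would substitute the spectral decomposition $\Sigma=Q\Lambda Q^t$ and compute $\alpha^t\Sigma\alpha=(Q^t\alpha)^t\Lambda(Q^t\alpha)$. Because $\Sigma$ is positive-definite, every diagonal entry of $\Lambda$ is positive, so $\sqrt{\Lambda}$ is a genuine (real, diagonal) matrix and $(Q^t\alpha)^t\Lambda(Q^t\alpha)=\|\sqrt{\Lambda}Q^t\alpha\|^2$. Taking square roots yields $\sqrt{\alpha^t\Sigma\alpha}=\|\sqrt{\Lambda}Q^t\alpha\|$ and completes the proof. Along the way one should note that $\alpha\neq 0$ (otherwise $H$ is empty or everything), so the denominator is nonzero.

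If one prefers not to cite stability of the Gaussian family under linear maps, the same result follows by a direct change of variables in the integral: put $y=Q^t(x-\mu)$, an orthogonal substitution with unit Jacobian, turning $\phi_X$ into a product of independent one-dimensional Gaussians with variances $\lambda_1,\dots,\lambda_n$; rescale each coordinate by $\sqrt{\lambda_j}$ to reach the standard Gaussian; and apply one further rotation carrying the normal vector of the transformed half-space onto the first axis, so that $n-1$ of the integrals evaluate to $1$ and the last is exactly $\Phi$ of the signed distance from the origin to the bounding hyperplane. I expect the only mildly delicate point to be tracking how $\alpha$ and $\beta$ transform under these substitutions and checking that this signed distance comes out to $-(\alpha^t\mu+\beta)/\|\sqrt{\Lambda}Q^t\alpha\|$ with the correct sign; there is no real analytic obstacle.
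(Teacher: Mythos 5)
Your primary argument is correct, but it takes a genuinely different route from the paper. You read the integral as $\Prob(\alpha^t X+\beta<0)$, invoke the standard fact that a linear functional of a Gaussian vector is univariate normal, $\alpha^t X\sim N(\alpha^t\mu,\ \alpha^t\Sigma\alpha)$, standardize, and then rewrite $\alpha^t\Sigma\alpha=(Q^t\alpha)^t\Lambda(Q^t\alpha)=\|\sqrt{\Lambda}Q^t\alpha\|^2$ using the spectral decomposition; all of these steps check out, and your remark that $\alpha\neq 0$ is needed for the denominator is a point the paper leaves implicit. The paper instead proves the statement by hand: it performs the chain of substitutions $w=v-\mu$, $x=Q^tw$, $y=\sqrt{\Lambda}^{-1}x$, and finally a rotation $P$ sending $\gamma=\sqrt{\Lambda}Q^t\alpha$ to $\|\gamma\|e_1$, which factors the integral into $n-1$ full Gaussian integrals and one half-line integral equal to $\Phi\bigl(-(\alpha^t\mu+\beta)/\|\gamma\|\bigr)$ --- exactly the ``alternative'' you sketch in your last paragraph, so in effect your backup plan is the paper's proof. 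The trade-off: your probabilistic route is shorter and conceptually transparent, but it cites closure of the Gaussian family under linear maps as a black box; the paper's explicit change of variables is longer but self-contained, essentially re-deriving that closure property in the special case needed. Either is acceptable; if you use the probabilistic version you should at least state (or reference) the distributional fact about $\alpha^t X$ rather than only asserting it.
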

\begin{proof}
We have 
\begin{equation}
\begin{split}
\int_H\ \phi_X(v)\ dv &= (2\pi)^{-\frac{n}{2}} \det(\Sigma)^{-\frac{1}{2}} \int_H\ \exp\left( - \frac{1}{2} (v-\mu)^t \Sigma^{-1} (v-\mu)\right)\ dv \\
&\stackrel{w := v-\mu}{=} (2\pi)^{-\frac{n}{2}} \det(\Sigma)^{-\frac{1}{2}} \int_{\{w\in\reals^n\ :\ \alpha^t(w + \mu) + \beta < 0\}}\ \exp\left( - \frac{1}{2} w^t \Sigma^{-1}w\right)\ dw\\
&= (2\pi)^{-\frac{n}{2}} \det(\Sigma)^{-\frac{1}{2}} \int_{\{w\in\reals^n\ :\ \alpha^t(w + \mu) + \beta < 0\}}\ \exp\left( - \frac{1}{2} (Q^tw)^t \Lambda^{-1} (Q^tw)\right)\ dw \\
&\stackrel{x := Q^tw}{=} (2\pi)^{-\frac{n}{2}} \det(\Sigma)^{-\frac{1}{2}} \int_{\{x\in\reals^n\ :\ \alpha^t(Qx + \mu) + \beta < 0\}}\ \exp\left( - \frac{1}{2} x^t \Lambda^{-1} x\right)\ dx \\
&\stackrel{y := \sqrt{\Lambda}^{-1}x}{=} (2\pi)^{-\frac{n}{2}} \int_{\{y\in\reals^n\ :\ \alpha^t(Q\sqrt{\Lambda}y + \mu) + \beta < 0\}}\ \exp\left( - \frac{1}{2} \|y\|^2\right)\ dy \\
&= (2\pi)^{-\frac{n}{2}} \int_{\{y\in\reals^n\ :\ \gamma^t y + \alpha^t\mu + \beta < 0\}}\ \exp\left( - \frac{1}{2} \|y\|^2\right)\ dy
\end{split}
\label{long1}
\end{equation}

\noindent
where $\gamma := \sqrt{\Lambda} Q^t\alpha$. Next, let $P$ be an orthogonal matrix such that $P\gamma = \|\gamma\| e_1.$ Using the substitution $z := Py$ in \eqref{long1} we obtain
\begin{equation*}
\begin{split}
\int_H\ \phi_X(v)\ dv &= (2\pi)^{-\frac{n}{2}} \int_{\{z\in\reals^n\ :\ \gamma^t P^t z + \alpha^t\mu + \beta < 0\}}\ \exp\left( - \frac{1}{2} \|z\|^2\right)\ dz\\
&= (2\pi)^{-\frac{n}{2}} \int_{\{z\in\reals^n\ :\ \|\gamma\| z_1 + \alpha^t\mu + \beta < 0\}}\ \exp\left( - \frac{1}{2} \left(z_1^2 + \sum_{k=2}^n\ z_k^2\right)\right)\ dz\\
&= (2\pi)^{-\frac{1}{2}} \int_{-\infty}^{-\frac{(\alpha^t\mu + \beta)}{\|\gamma\|}}\ \exp\left(-\frac{1}{2}z_1^2\right)\ dz_1 \cdot (2\pi)^{-\frac{n-1}{2}} \int_{\reals^{n-1}}\ \exp\left(-\frac{1}{2}\sum_{k=2}^n\ z_k^2\right)\ dz_2\ldots dz_n\\
&= \Phi\left(-\frac{\alpha^t\mu + \beta}{\|\sqrt{\Lambda}Q^t\alpha\|}\right),
\end{split}
\end{equation*}

\noindent
which concludes the proof.
\end{proof}

The following result is a simple application of Theorem \ref{maintheorem} and determines the distribution of $M_{\theta}$:
\begin{cor}
We have 
\begin{equation*}
\begin{split}
\Prob(M_{\theta} = TN) &= p_0 \Phi\left(-\frac{\alpha^t\mu_0 + \beta - \theta}{\|\sqrt{\Lambda}Q^t\alpha\|}\right),\\
\Prob(M_{\theta} = FP) &= p_0 \left(1 - \Phi\left(-\frac{\alpha^t\mu_0 + \beta - \theta}{\|\sqrt{\Lambda}Q^t\alpha\|}\right)\right),\\
\Prob(M_{\theta} = FN) &= p_1 \Phi\left(-\frac{\alpha^t\mu_1 + \beta - \theta}{\|\sqrt{\Lambda}Q^t\alpha\|}\right),\\
\Prob(M_{\theta} = TP) &= p_1 \left(1-\Phi\left(-\frac{\alpha^t\mu_1 + \beta - \theta}{\|\sqrt{\Lambda}Q^t\alpha\|}\right)\right).
\end{split}
\end{equation*}
\end{cor}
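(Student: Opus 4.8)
The plan is to apply Theorem \ref{maintheorem} directly to each of the four integrals displayed just before the corollary. Recall that
$$\Prob(M_{\theta} = TN) = p_0 \int_{H_0(\theta)}\ \phi_0(x)\ dx,$$
where $H_0(\theta)$ is the half-space $\alpha^t x + \beta < \theta$, i.e., $\alpha^t x + (\beta - \theta) < 0$. This is exactly a half-space of the form treated in Theorem \ref{maintheorem} with the vector $\alpha$ unchanged and the scalar constant $\beta$ replaced by $\beta - \theta$. The density $\phi_0$ is the multivariate normal density with mean $\mu_0$ and covariance $\Sigma$, so $X \sim N(\mu_0,\Sigma)$ in the notation of the theorem. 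Substituting $\mu = \mu_0$ and the constant $\beta - \theta$ into the conclusion of Theorem \ref{maintheorem} yields
$$\int_{H_0(\theta)}\ \phi_0(x)\ dx = \Phi\left(-\frac{\alpha^t\mu_0 + \beta - \theta}{\|\sqrt{\Lambda}Q^t\alpha\|}\right),$$
and multiplying by $p_0$ gives the stated formula for $\Prob(M_{\theta} = TN)$. The same argument with $\mu_0$ replaced by $\mu_1$ and $p_0$ by $p_1$ gives $\Prob(M_{\theta} = FN)$, since $\Prob(M_{\theta} = FN) = p_1 \int_{H_0(\theta)} \phi_1(x)\,dx$ and $\phi_1$ is the $N(\mu_1,\Sigma)$ density.

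For the two remaining quantities, I would note that $H_1(\theta)$ is the complement (up to a set of measure zero, namely the hyperplane $\alpha^t x + \beta = \theta$) of $H_0(\theta)$ in $\reals^n$. Hence for either class $i \in \{0,1\}$,
$$\int_{H_1(\theta)}\ \phi_i(x)\ dx = \int_{\reals^n}\ \phi_i(x)\ dx - \int_{H_0(\theta)}\ \phi_i(x)\ dx = 1 - \Phi\left(-\frac{\alpha^t\mu_i + \beta - \theta}{\|\sqrt{\Lambda}Q^t\alpha\|}\right),$$
using that $\phi_i$ integrates to $1$ over $\reals^n$ and the value of the $H_0(\theta)$ integral just computed. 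Multiplying by $p_0$ (for $i=0$, giving $\Prob(M_{\theta}=FP)$) and by $p_1$ (for $i=1$, giving $\Prob(M_{\theta}=TP)$) produces the last two formulae.

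There is essentially no obstacle here: the corollary is a bookkeeping exercise that matches the four integral expressions for the scenarios of $M_\theta$ — written out in the text immediately preceding the statement — against the single integral formula proved in Theorem \ref{maintheorem}. The only mild point worth a word is the identification of the relevant half-space: Theorem \ref{maintheorem} is stated for $\alpha^t x + \beta < 0$, so one must observe that $H_0(\theta) = \{x : \alpha^t x + (\beta - \theta) < 0\}$ and feed the shifted constant $\beta - \theta$ into the formula, and dually that $H_1(\theta)$ differs from the complement of $H_0(\theta)$ only by the measure-zero hyperplane $\{\alpha^t x + \beta = \theta\}$, which does not affect the integral. Everything else is substitution and multiplication by the class priors $p_0, p_1$.
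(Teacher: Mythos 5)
Your proof is correct and is exactly the argument the paper intends (it calls the corollary ``a simple application of Theorem \ref{maintheorem}'' and omits the details): apply the theorem with the shifted constant $\beta-\theta$ and mean $\mu_i$, multiply by the class priors, and obtain the $H_1(\theta)$ integrals by complementation up to the measure-zero hyperplane. Nothing further is needed.
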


\noindent
Furthermore, Theorem \ref{maintheorem} allows for the computation of FPR and TPR, and consequently, the ROC curve (we omit an elementary derivation):
\begin{cor}
\noindent
We have
\begin{equation*}
\begin{split}
FPR(\theta) = 1 - \Phi\left(-\frac{\alpha^t\mu_0 + \beta - \theta}{\|\sqrt{\Lambda}Q^t\alpha\|}\right),\\
TPR(\theta) = 1-\Phi\left(-\frac{\alpha^t\mu_1 + \beta - \theta}{\|\sqrt{\Lambda}Q^t\alpha\|}\right).
\end{split}
\end{equation*}

\noindent
Consequently, $FPR(\theta), TPR(\theta) \in (0,1)$ for every $\theta\in\reals$ and the following equality holds true:
\begin{gather}
TPR = 1- \Phi\left(\Phi^{-1}(1-FPR) - \frac{\alpha^t(\mu_1 -\mu_0)}{\|\sqrt{\Lambda}Q^t\alpha\|}\right).
\label{ROCformula}
\end{gather}
\end{cor}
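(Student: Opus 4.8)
The plan is to combine Theorem~\ref{maintheorem} with the expressions for $\Prob(M_\theta = \cdot)$ displayed just before it, substitute into the definitions~\eqref{fprtpr}, and then eliminate the threshold $\theta$. First I would observe that the negative half-space $H_0(\theta)$ is precisely $\alpha^t x + (\beta - \theta) < 0$, so Theorem~\ref{maintheorem}, applied to $X \sim N(\mu_i,\Sigma)$ with the constant term $\beta$ replaced by $\beta - \theta$, gives $\int_{H_0(\theta)} \phi_i(x)\,dx = \Phi\!\left(-\tfrac{\alpha^t\mu_i + \beta - \theta}{\|\sqrt{\Lambda}Q^t\alpha\|}\right)$ for $i \in \{0,1\}$; since $H_1(\theta)$ is the complement of $H_0(\theta)$ up to a Lebesgue-null set, the integral over $H_1(\theta)$ equals $1$ minus this quantity. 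Multiplying by the priors $p_0,p_1$ recovers the four formulas of the preceding corollary for $\Prob(M_\theta = TN)$, $\Prob(M_\theta = FP)$, $\Prob(M_\theta = FN)$, $\Prob(M_\theta = TP)$.

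Next I would substitute into~\eqref{fprtpr}. The key simplification is that the denominator of $FPR(\theta)$ is $\Prob(M_\theta = FP) + \Prob(M_\theta = TN) = p_0$ and the denominator of $TPR(\theta)$ is $\Prob(M_\theta = TP) + \Prob(M_\theta = FN) = p_1$, so the priors cancel and one is left with $FPR(\theta) = 1 - \Phi\!\left(-\tfrac{\alpha^t\mu_0 + \beta - \theta}{\|\sqrt{\Lambda}Q^t\alpha\|}\right)$ and $TPR(\theta) = 1 - \Phi\!\left(-\tfrac{\alpha^t\mu_1 + \beta - \theta}{\|\sqrt{\Lambda}Q^t\alpha\|}\right)$. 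Since $\Phi$ maps $\reals$ onto the open interval $(0,1)$, both quantities lie strictly in $(0,1)$, which is the second assertion.

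Finally, to obtain~\eqref{ROCformula} I would eliminate $\theta$. As $\Phi$ is a strictly increasing bijection $\reals \to (0,1)$ it is invertible, so the two displayed identities give $\Phi^{-1}(1 - FPR(\theta)) = -\tfrac{\alpha^t\mu_0 + \beta - \theta}{\|\sqrt{\Lambda}Q^t\alpha\|}$ and $\Phi^{-1}(1 - TPR(\theta)) = -\tfrac{\alpha^t\mu_1 + \beta - \theta}{\|\sqrt{\Lambda}Q^t\alpha\|}$. Subtracting, the $\beta - \theta$ terms cancel---crucially because the common-covariance assumption of LDA makes the denominator $\|\sqrt{\Lambda}Q^t\alpha\|$ the same for both classes---yielding $\Phi^{-1}(1 - TPR) = \Phi^{-1}(1 - FPR) - \tfrac{\alpha^t(\mu_1 - \mu_0)}{\|\sqrt{\Lambda}Q^t\alpha\|}$; applying $\Phi$ and rearranging gives~\eqref{ROCformula}.

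As for difficulty: there is no real obstacle here, only bookkeeping. The one conceptual point worth flagging is that $\theta$ can be eliminated at all, which rests on both the invertibility of $\Phi$ and the fact that LDA's shared covariance matrix forces the normalizing factor $\|\sqrt{\Lambda}Q^t\alpha\|$ to be identical in the expressions for $FPR$ and $TPR$; were the two classes allowed different covariances this step would fail and the ROC curve would not admit such a clean closed form.
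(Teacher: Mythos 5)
Your proposal is correct and is precisely the elementary derivation the paper omits: apply Theorem \ref{maintheorem} with $\beta$ shifted to $\beta-\theta$, note that the denominators in \eqref{fprtpr} collapse to $p_0$ and $p_1$ so the priors cancel, and eliminate $\theta$ via the invertibility of $\Phi$, the cancellation working because the common covariance gives the same normalizing constant $\|\sqrt{\Lambda}Q^t\alpha\|$ in both rates. No gaps; your closing remark about why a shared covariance matrix is essential for the closed form is a fair and accurate observation.
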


Trivially, equality \eqref{ROCformula} allows for the following formula for the area under the curve (AUC):
\begin{equation*}
\begin{split}
AUC &= 1 - \int_0^1\ \Phi\left(\Phi^{-1}(1-FPR) - \frac{\alpha^t(\mu_1 -\mu_0)}{\|\sqrt{\Lambda}Q^t\alpha\|}\right)\ d(FPR)\\
&\stackrel{\Phi^{-1}(1-FPR) = v}{=} 1 - \int_{-\infty}^{\infty}\ \Phi\left(v - \frac{\alpha^t(\mu_1 -\mu_0)}{\|\sqrt{\Lambda}Q^t\alpha\|}\right)\phi(v)\ dv.
\end{split}
\end{equation*}

Our next goal is to establish the fundamental properties of the ROC curve, namely monotonicity and concavity. This turns out to be a standard problem in mathematical analysis:

\begin{thm}
ROC curve \eqref{ROCformula} is increasing and concave.
\end{thm}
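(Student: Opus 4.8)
The plan is to treat the ROC curve \eqref{ROCformula} as the graph of the function
$$g\colon(0,1)\longrightarrow(0,1),\qquad g(t) := 1-\Phi\big(\Phi^{-1}(1-t) - c\big),\qquad c := \frac{\alpha^t(\mu_1-\mu_0)}{\|\sqrt{\Lambda}Q^t\alpha\|},$$
where $t$ plays the role of $FPR$, and to show that $g$ is strictly increasing and strictly concave on $(0,1)$. First I would record the sign of the constant $c$: by \eqref{alphaandbeta} we have $\alpha = \Sigma^{-1}(\mu_1-\mu_0)$, so $\alpha^t(\mu_1-\mu_0) = (\mu_1-\mu_0)^t\Sigma^{-1}(\mu_1-\mu_0) > 0$ because $\Sigma^{-1}$ is positive-definite and $\mu_1\neq\mu_0$ (if $\mu_1 = \mu_0$ then $\alpha = 0$ and the ROC curve is not even defined). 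Since the denominator $\|\sqrt{\Lambda}Q^t\alpha\|$ is positive, this gives $c > 0$.

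Next I would differentiate. Putting $u(t) := \Phi^{-1}(1-t)$, the relation $\Phi(u(t)) = 1-t$ together with the chain rule yields $u'(t) = -1/\phi(u(t)) < 0$, where $\phi$ denotes the standard normal density. Differentiating $g$ then gives
$$g'(t) = -\phi\big(u(t)-c\big)\,u'(t) = \frac{\phi(u(t)-c)}{\phi(u(t))}.$$
Because $\phi(x) = (2\pi)^{-1/2}e^{-x^2/2}$, the ratio collapses via the elementary Gaussian identity $\phi(u-c)/\phi(u) = \exp\big(cu - \tfrac{1}{2}c^2\big)$, so that
$$g'(t) = \exp\!\Big(c\,\Phi^{-1}(1-t) - \tfrac{1}{2}c^2\Big) > 0,$$
which establishes monotonicity.

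Finally, I would differentiate once more. Using the expression for $g'$ just obtained together with $u'(t) = -1/\phi(u(t))$,
$$g''(t) = c\,u'(t)\,\exp\!\Big(c\,u(t) - \tfrac{1}{2}c^2\Big) = -\,\frac{c}{\phi(u(t))}\,\exp\!\Big(c\,u(t) - \tfrac{1}{2}c^2\Big) < 0,$$
the sign being negative precisely because $c > 0$. Hence $g$ is strictly concave, which finishes the proof.

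There is no serious obstacle here: the statement reduces to two applications of the chain rule plus the identity $\phi(u-c)/\phi(u) = \exp(cu - c^2/2)$. The only substantive point — which the authors may pass over — is the positivity of $c$, and this is exactly where the specific LDA form $\alpha = \Sigma^{-1}(\mu_1-\mu_0)$ of the discriminant direction, rather than an arbitrary linear rule, is used; for a generic $\alpha$ one obtains that the curve is still increasing but concave only when $\alpha^t(\mu_1-\mu_0) > 0$ (and convex otherwise).
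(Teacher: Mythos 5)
Your proof is correct and follows essentially the same route as the paper: compute $\frac{d(TPR)}{d(FPR)}$ as the ratio $\phi(\Phi^{-1}(1-FPR)-\delta)/\phi(\Phi^{-1}(1-FPR))$ (positive, hence increasing), then differentiate once more and use the Gaussian structure (your identity $\phi(u-c)/\phi(u)=\exp(cu-\tfrac{1}{2}c^2)$ is just a repackaging of the paper's $\phi'(t)/\phi(t)=-t$) to see that the second derivative is $-c$ times a positive quantity. The one place you go beyond the paper is in explicitly verifying $c=\alpha^t(\mu_1-\mu_0)/\|\sqrt{\Lambda}Q^t\alpha\|>0$ via $\alpha=\Sigma^{-1}(\mu_1-\mu_0)$ and positive definiteness of $\Sigma^{-1}$ (with $\mu_1\neq\mu_0$); the paper leaves this sign implicit, so your remark is a useful clarification rather than a different approach.
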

\begin{proof}
First, for notational convenience, we put $\delta := \frac{\alpha^t(\mu_1 -\mu_0)}{\|\sqrt{\Lambda}Q^t\alpha\|}.$ By the inverse function rule we have 
$$\frac{d(TPR)}{d(FPR)} = \frac{\phi(\Phi^{-1}(1-FPR) - \delta)}{\phi(\Phi^{-1}(1-FPR))} > 0,$$

\noindent
which means that the ROC curve is increasing. 

Next, we compute the second derivative:  
$$\frac{d^2(TPR)}{d(FPR)^2} = \frac{\phi(\Phi^{-1}(1-FPR) - \delta)}{\phi^2(\Phi^{-1}(1-FPR))} \left(\frac{\phi'(\Phi^{-1}(1-FPR))}{\phi(\Phi^{-1}(1-FPR))} - \frac{\phi'(\Phi^{-1}(1-FPR) - \delta)}{\phi(\Phi^{-1}(1-FPR) - \delta)}\right).$$

\noindent
Since $\frac{\phi'(t)}{\phi(t)} = -t$ for every $t\in\reals$, we conclude that 
$$\frac{d^2(TPR)}{d(FPR)^2} = -\frac{\phi(\Phi^{-1}(1-FPR) - \delta)}{\phi^2(\Phi^{-1}(1-FPR))}\delta,$$

\noindent
which establishes concavity of the ROC curve.
\end{proof}

Finally, we come back to the question of choosing the ``best'' threshold $\theta.$ There are various ways to understand ``optimality'' in this context, but Youden's J--index $J(\theta) := TPR(\theta) - FPR(\theta)$ is arguably one of the most popular (see \cite{Youden}). The corresponding Youden's threshold is given by
$$\theta_{Youden} := \arg \max_{\theta\in\Theta} (TPR(\theta) - FPR(\theta))$$

\noindent
and the following result establishes its value in the LDA setting:

\begin{thm}
We have $\theta_{Youden} = 0$ with the corresponding rates 
\begin{equation*}
\begin{split}
FPR(0) &= 1 - \Phi\left(-\frac{\alpha^t\mu_0 + \beta}{\|\sqrt{\Lambda}Q^t\alpha\|}\right),\\
TPR(0) &= 1-\Phi\left(-\frac{\alpha^t\mu_1 + \beta}{\|\sqrt{\Lambda}Q^t\alpha\|}\right)
\end{split}
\end{equation*}

\noindent
and the maximal value of the Youden J--index
$$J(0) = \Phi\left(-\frac{\alpha^t\mu_0 + \beta}{\|\sqrt{\Lambda}Q^t\alpha\|}\right) - \Phi\left(-\frac{\alpha^t\mu_1 + \beta}{\|\sqrt{\Lambda}Q^t\alpha\|}\right).$$
\end{thm}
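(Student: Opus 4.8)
The plan is to treat $J(\theta) = TPR(\theta) - FPR(\theta)$ as a smooth function of the single real variable $\theta$ and to locate its maximizer by elementary calculus. Writing $\sigma := \|\sqrt{\Lambda}Q^t\alpha\|$ for brevity, the preceding corollary gives
$$J(\theta) = \Phi\left(-\frac{\alpha^t\mu_0 + \beta - \theta}{\sigma}\right) - \Phi\left(-\frac{\alpha^t\mu_1 + \beta - \theta}{\sigma}\right),$$
so differentiating and using $\Phi' = \phi$ yields
$$J'(\theta) = \frac{1}{\sigma}\left(\phi\left(\frac{\theta - \alpha^t\mu_0 - \beta}{\sigma}\right) - \phi\left(\frac{\theta - \alpha^t\mu_1 - \beta}{\sigma}\right)\right).$$
First I would solve $J'(\theta) = 0$. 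Since $\phi$ is even and strictly decreasing on $[0,\infty)$, the equation $\phi(a) = \phi(b)$ forces $a = \pm b$; the alternative $a = b$ would give $\alpha^t\mu_0 = \alpha^t\mu_1$, which is impossible because $\alpha^t(\mu_1 - \mu_0) = (\mu_1-\mu_0)^t\Sigma^{-1}(\mu_1-\mu_0) > 0$ by positive-definiteness of $\Sigma^{-1}$ and the standing assumption $\mu_0 \neq \mu_1$. Hence the unique critical point satisfies $\theta - \alpha^t\mu_0 - \beta = -(\theta - \alpha^t\mu_1 - \beta)$, i.e.\ $\theta = \tfrac12\alpha^t(\mu_0 + \mu_1) + \beta$.

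Next I would show that this critical value is exactly $0$. Substituting $\alpha = \Sigma^{-1}(\mu_1 - \mu_0)$ and using the symmetry of $\Sigma^{-1}$, the cross terms $\mu_1^t\Sigma^{-1}\mu_0$ and $\mu_0^t\Sigma^{-1}\mu_1$ cancel, leaving $\alpha^t(\mu_0 + \mu_1) = \mu_1^t\Sigma^{-1}\mu_1 - \mu_0^t\Sigma^{-1}\mu_0$, which is precisely $-2\beta$ by the definition of $\beta$ in \eqref{alphaandbeta}. Therefore $\tfrac12\alpha^t(\mu_0+\mu_1) + \beta = 0$, so $\theta = 0$ is the only critical point of $J$.

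It then remains to confirm that $\theta = 0$ is a global maximizer rather than a minimizer. I would argue from boundary behaviour: since $\alpha^t\mu_1 + \beta > \alpha^t\mu_0 + \beta$, monotonicity of $\Phi$ gives $J(\theta) > 0$ for every $\theta \in \reals$, while $J(\theta) \to 0$ as $\theta \to \pm\infty$ (both $\Phi$-terms tend to $0$, resp.\ to $1$). A continuous function that is strictly positive on $\reals$ and vanishes at both ends attains its supremum at a finite point, which must be a critical point; as $0$ is the only one, $\theta_{Youden} = 0$. (One could instead compute $J''(0)$ directly — using $\phi'(t) = -t\phi(t)$ it collapses to a negative multiple of $\phi$ — but the boundary argument is cleaner.) Finally, substituting $\theta = 0$ into the corollary's formulas for $FPR$ and $TPR$ produces the stated rates, and $J(0) = TPR(0) - FPR(0)$ gives the claimed value of the index.

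The only real obstacle is the bookkeeping in the second step: recognizing that the midpoint expression $\tfrac12\alpha^t(\mu_0+\mu_1) + \beta$ collapses to $0$ once the cross terms cancel. Everything else is routine, although one should keep in mind the implicit assumption $\mu_0 \neq \mu_1$, without which $\alpha = 0$, $\sigma = 0$, and the entire construction degenerates.
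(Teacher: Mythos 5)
Your proposal is correct and follows essentially the same route as the paper: differentiate $J(\theta)$ in the single variable $\theta$, locate the unique critical point $\tfrac12\alpha^t(\mu_0+\mu_1)+\beta$, and show via the definitions of $\alpha$ and $\beta$ in \eqref{alphaandbeta} that it collapses to $0$, then read off the rates from the corollary. If anything, your write-up is slightly more careful than the paper's: you rule out the spurious root of $\phi(a)=\phi(b)$ with $a=b$, you justify that the critical point is a global maximum via the boundary behaviour $J>0$ and $J(\theta)\to 0$ as $\theta\to\pm\infty$, and you flag the implicit assumption $\mu_0\neq\mu_1$, whereas the paper solves $J'(\theta)=0$ by factoring out an exponential and asserts maximality without further comment.
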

\begin{proof}
For notational convenience, we set $\gamma := \sqrt{\Lambda}Q^t\alpha,\ \delta_0 := \frac{\alpha^t\mu_0 + \beta}{\|\gamma\|}$ and $\delta_1 := \frac{\alpha^t\mu_1 + \beta}{\|\gamma\|}.$ We have 
\begin{equation*}
\begin{split}
\frac{d(TPR-FPR)}{d\theta} &= \frac{\phi\left(\frac{\theta}{\|\gamma\|} - \delta_0\right) - \phi\left(\frac{\theta}{\|\gamma\|} - \delta_1\right)}{\|\gamma\|}\\
&= \frac{1}{2\pi\|\gamma\|} \left(\exp\left(-\frac{1}{2}\left(\frac{\theta}{\|\gamma\|} - \delta_0\right)^2\right) - \exp\left(-\frac{1}{2}\left(\frac{\theta}{\|\gamma\|} - \delta_1\right)^2\right)\right) \\
&\stackrel{\delta_1 = \delta_0 + \delta}{=} \frac{\exp\left(-\frac{1}{2}\left(\frac{\theta}{\|\gamma\|} - \delta_0\right)^2\right)}{2\pi\|\gamma\|} \left(1 - \exp\left(\delta\left(\frac{\theta}{\|\gamma\|}-\delta_0\right) - \frac{1}{2}\delta^2\right)\right),
\end{split}
\end{equation*} 

\noindent
which implies that the Youden's J-index is maximized if and only if  
$$1 = \exp\left(\delta\left(\frac{\theta}{\|\gamma\|}-\delta_0\right) - \frac{1}{2}\delta^2\right).$$

\noindent
We compute 
$$\theta_{Youden} = \alpha^t\left(\frac{\mu_0+\mu_1}{2}\right) + \beta \stackrel{\eqref{alphaandbeta}}{=} 0.5(\mu_1 - \mu_0)^t\Sigma^{-1}(\mu_0+\mu_1) + 0.5\bigg(\mu_0^t\Sigma^{-1}\mu_0 - \mu_1^t\Sigma^{-1}\mu_1\bigg),$$

\noindent
which, after a simple rearrangement of terms, concludes the proof. 
\end{proof}

\section{Application to Wisconsin breast cancer dataset}

This final section of the paper is focused on illustrating the performance of discussed results and techniques. To this end, we make use of the classic Wisconsin breast cancer dataset, available in the scikit--learn package in Python. The dataset was created in 1995 by Wolberg, Street and Mangasarian and consists of nearly 569 records of breast cancer patients. Each record compiles 30 characteristics of the tumor, i.e., its texture, area, smoothness, symmetry etc. The patients are divided into two classes, ``benign'' and ``malignant'', according to the type of cancer they have. 

For the sake of simplicity we restrict our attention to just two characteristics: mean area and smoothness. This allows for the visualization in Figure \ref{area_smoothness}, which gives partial credence to modeling the distributions of both classes as multivariate normal.  

\begin{figure}
\caption{Wisconsin breast cancer dataset -- mean area versus mean smoothness}
\centering
\includegraphics[width=8cm]{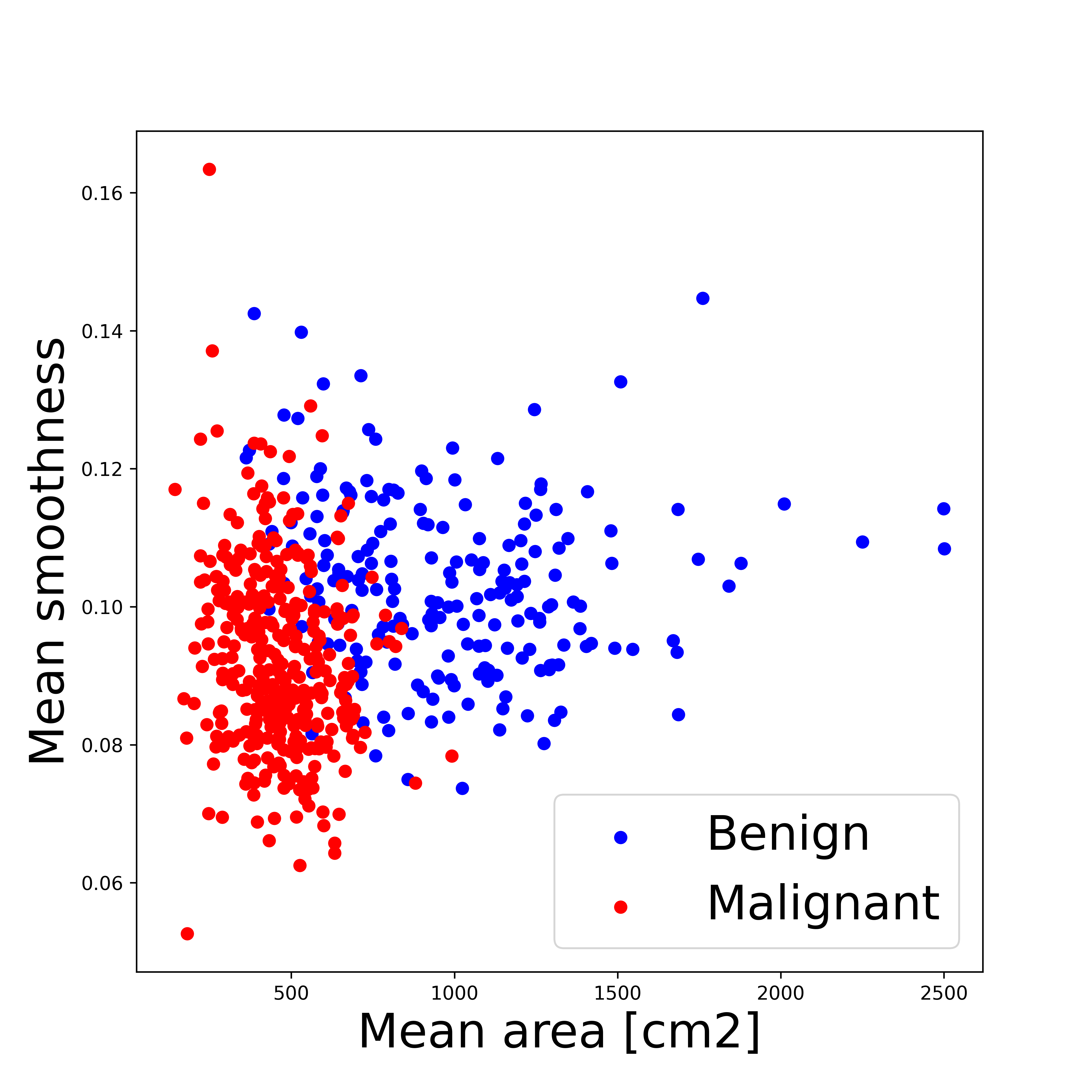}
\label{area_smoothness}
\end{figure}

In order to assess the performance of our results we begin by splitting the dataset into training and test sets in a ratio of $4:1$. Consequently, we estimate the means $\mu_0, \mu_1$, where $0$ corresponds to the negative, benign class, while $1$ denotes the positive, malignant class. Next, we determine the covariance matrix, its spectral decomposition and the values of $\alpha, \beta, \delta$. This allows for the computation of the theoretical ROC curve as well as its AUC (via numerical integration) and Youden coordinates $(FPR(0), TPR(0)).$ For comparison we also work out the empirical ROC and its corresponding AUC. All of these steps can be accomplished in roughly 60 lines of Python code, and the result is plotted in Figure \ref{ROC}:
\begin{figure}
\caption{Theoretical and empirical ROC curves with Youden J-indices}
\centering
\includegraphics[width=8cm]{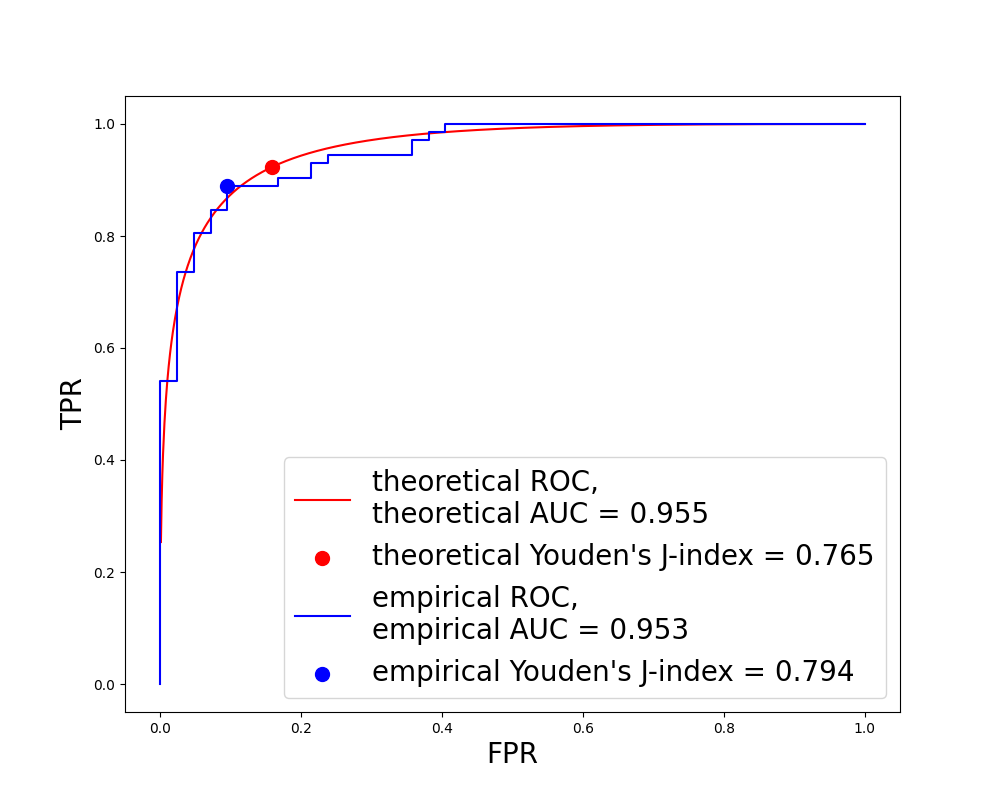}
\label{ROC}
\end{figure}

\begin{lstlisting}[language=Python]
import numpy as np
from sklearn.datasets import load_breast_cancer
from sklearn.model_selection import train_test_split
breast_cancer = load_breast_cancer(as_frame=True)
X = breast_cancer.data[['mean area', 'mean smoothness']].values
y = breast_cancer.target.values
X_train, X_test, y_train, y_test = \
         train_test_split(X, y, test_size=0.2, random_state=45)

from sklearn.discriminant_analysis import LinearDiscriminantAnalysis
lda = LinearDiscriminantAnalysis(solver = 'eigen',
                                 store_covariance = True)
lda.fit(X_train, y_train)
mu0, mu1 = lda.means_
Sigma = lda.covariance_
eigenvalues, Q = np.linalg.eigh(Sigma)
Lambda = np.diag(eigenvalues)
alpha = lda.coef_[0]
beta = lda.intercept_

from scipy.stats import norm
delta = np.dot(alpha, mu1-mu0) /\
        np.linalg.norm(np.sqrt(Lambda) @ Q.T @ alpha)
FPR = np.linspace(0.001, 0.999, 1000)
TPR = 1-norm.cdf(norm.ppf(1-FPR)-delta)

from scipy import integrate
AUC = integrate.trapezoid(TPR, dx=(0.999-0.001)/1000)

FPR0 = 1-norm.cdf(-(np.dot(alpha,mu0)+beta)/
                  np.linalg.norm(np.sqrt(Lambda) @ Q.T @ alpha))[0]
TPR0 = 1-norm.cdf(-(np.dot(alpha,mu1)+beta)/
                  np.linalg.norm(np.sqrt(Lambda) @ Q.T @ alpha))[0]

from sklearn.metrics import roc_curve, roc_auc_score
y_proba = lda.predict_proba(X_test)[:, 1]
FPR_emp, TPR_emp, _ = roc_curve(y_test, y_proba)
theta_emp = np.argmax(TPR_emp-FPR_emp)
J_emp = TPR_emp[theta_emp] - FPR_emp[theta_emp]
AUC_emp = roc_auc_score(y_test, y_proba)

import matplotlib.pyplot as plt
plt.figure(figsize=(10,8))
plt.plot(FPR, TPR, c='red', label='theoretical ROC,\n' + \
         f'theoretical AUC = {np.round(AUC,3)}')
plt.scatter(FPR0, TPR0, c='red', s=100,
            label=f'theoretical Youden\'s J-index = {np.round(TPR0-FPR0,3)}')
plt.plot(FPR_emp, TPR_emp, c='blue', label='empirical ROC,\n' + \
         f'empirical AUC = {np.round(AUC_emp,3)}')
plt.scatter(FPR_emp[theta_emp], TPR_emp[theta_emp], c='blue', s=100,
            label=f'empirical Youden\'s J-index = {np.round(J_emp,3)}')
plt.xlabel('FPR', fontsize=20)
plt.ylabel('TPR', fontsize=20)
plt.legend(fontsize=20)
plt.savefig('ROC.png')
\end{lstlisting}

\end{document}